\newtheorem{theorem}{Theorem}[section]
\newcommand{\bs}[1]{\boldsymbol{#1}}
\newcommand{\wh}[1]{\widehat{#1}}
\newcommand{\conv}[1]{\operatorname{conv}({#1})}
\begin{document}

\title{Convex Hull Formulations for \\ Mixed-Integer Multilinear Functions}

\author[aff1]{Harsha Nagarajan}
\author[aff2]{Kaarthik Sundar}
\author[aff1]{Hassan Hijazi}
\author[aff1]{Russell Bent}

\affil[aff1]{Theoretical Division (T-5), Los Alamos National Laboratory, NM, USA (Contact: harsha@lanl.gov)}
\affil[aff2]{Center for Nonlinear Studies, Los Alamos National Laboratory, NM, USA}

\maketitle

\begin{abstract}
In this paper, we present convex hull formulations for a mixed-integer, multilinear term/function (MIMF) that features products of multiple continuous and binary variables. We develop two equivalent convex relaxations of an MIMF and study their polyhedral properties in their corresponding higher-dimensional spaces. We numerically observe that the proposed formulations consistently perform better than state-of-the-art relaxation approaches. 
\end{abstract}
\section{Introduction} \label{sec:intro}
Last few decades has seen tremendous advances in developing efficient convex relaxations of multilinear functions to solve factorable, non-convex Mixed-Integer Nonlinear Programs (MINLPs) to global optimality \cite{bao2015global}. State-of-the-art spatial B\&B-based solvers, such as Baron and SCIP, heavily rely on the tightness of relaxations to efficiently converge to global optimality. Nevertheless, global optimization for large-scale MINLPs is still a challenge.  Particularly, there has been scarce literature which focuses on developing relaxations for Mixed-Integer Multilinear Functions (MIMFs). Thus, the focus of this article is on developing tight, term-wise polyhedral relaxations for an MIMF. 

MIMF is a multilinear function that features products of multiple continuous and binary variables. MINLPs with MIMFs appear in several important applications \cite{lu2017gmd,deep2017acc,bestuzheva2016convex,nagarajan2016optimal}. A hierarchy of reformulation-based relaxations can be applied to generate an extended linear program, however without guarantees of capturing the convex hull in the mixed-integer space \cite{sherali1990hierarchy,liberti2004reformulation}. There has been a line of work that utilizes perspective functions and disjunctive formulations to develop tight and tractable relaxations for convex MINLPs, involving on-off variables \cite{ceria1999convex,gunluk2010perspective,hijazi2012mixed}. In this article, we leverage theses ideas and develop tractable polyhedral relaxations for nonconvex MIMFs. 
We now formally define MIMFs.

\textbf{Notation} Boldface fonts denote vectors. Given two vectors $\bm a$ and $\bm b$, $\bm a^T \bm b$ denotes the dot product of the two vectors. $\bm 0$ and $\bm 1$ are used to denote vectors of zeros and ones, respectively. The operation $|\cdot|$ denotes the cardinality of a set. $\conv{A}$ denotes the convex hull of set $A$ containing points in the Euclidean space. \\

\noindent
Given this notation, an MIMF, $\varphi(\bm x, \bm z) : [\bm \ell, \bm u] \times \{0,1\}^{|\mathcal{J}|} \rightarrow \mathbb{R}$, is formally defined as 
$$\varphi(\bm x, \bm z) = \prod_{i\in \mathcal I} x_i \prod_{j \in \mathcal J} z_j,$$ 
where $\mathcal I$ and $\mathcal J$ denote the index set for the vector of continuous and binary variables, respectively, and $[\bm \ell, \bm u] = \{ \bm x \in \mathbb R^{|\mathcal I|}: \bm \ell \leqslant \bm x \leqslant \bm u\}$. For ease of exposition, we also let $\varphi_{\bm x}(\bm x) = \prod_{i \in \mathcal I} x_i$ and $\varphi_{\bm z} (\bm z) = \prod_{j \in \mathcal J} z_j$. We then let $\mathcal K$, indexed by $k$, denote the set of $2^{|\mathcal I|}$ extreme points of $\bm x$ as defined by $[\bm \ell, \bm u]$. The notation $\bm \xi_k$ is used to denote extreme point $k \in \mathcal K$. Given $\varphi(\bm x, \bm z) = \varphi_{\bm x}(\bm x) \cdot \varphi_{\bm z}(\bm z)$, the primary goal of this paper is to develop tight polyhedral relaxations of the graph of such a function, given by the set $X = \{(\bm x, \bm z, \wh{\varphi}) \in [\bm \ell, \bm u] \times \{0,1\}^{|\mathcal{J}|} \times \mathbb R: \wh{\varphi} = \varphi(\bm x, \bm z) \}$. 


\paragraph{McCormick Relaxations:} In the special case of continuous bilinear function, $\varphi_{\bm x}(\bm x) = x_1 x_2$, McCormick \cite{mccormick1976computability} developed a widely used tight relaxation. Given set
$X_{\bm x}^B = \left\{ (x_1, x_2, \wh{\varphi}) \in [\ell_1, u_1] \times  [\ell_2, u_2] \times \mathbb R: \wh{\varphi} = x_1 x_2 \right\}$, the McCormick relaxation is given by 
\begin{equation}
\left \langle x_1,x_2,\wh \varphi \right\rangle^{MC} := \conv{X^B_{\bm x}} = 
 \left\{
\begin{gathered}
    (x_1,x_2,\wh{\varphi})\in  [\ell_1,u_1]\times [\ell_2,u_2] \times \mathbb{R}: \\
    \wh{\varphi} \geqslant u_2 x_1 + u_1 x_2 - u_1 u_2, \quad \wh{\varphi} \geqslant \ell_2 x_1 + \ell_1 x_2 - \ell_1 \ell_2, \\
   \wh{\varphi} \leqslant u_2 x_1 + \ell_1 x_2 - \ell_1 u_2, \quad \wh{\varphi} \leqslant \ell_2 x_1 + u_1 x_2 - u_1 \ell_2
\end{gathered}
\right\}.
\label{eq:mc}
\end{equation}
It is known that Eq. \eqref{eq:mc} describes the convex hull of $X_{\bm x}^B$ (see \cite{mccormick1976computability}). For a multilinear function, $\varphi_{\bm x}(\bm x) = \prod_{i\in\mathcal{I}}x_i$, standard global optimization methods apply recursive McCormick relaxations sequentially on bilinear terms, which do not necessarily capture the convex hull of the graph of $\varphi_{\bm x}(\bm x)$ \cite{nagarajan2016tightening,nagarajan2017adaptive}. 

In the special case of multilinear function, $\varphi_{\bm z}(\bm z) = \prod_{i\in\mathcal{J}}z_i$, \cite{fortet1960applications} derives a set of $|\mathcal{J}|+1$ number of constraints which captures the convex hull of the graph of $\varphi_{\bm z}(\bm z)$. Given the set $X_{\bm z} = \left\{(\bm z,\wh{z}) \in \{0,1\}^{|\mathcal{J}|} \times [0,1]: \wh{z} = \varphi_{\bm z}(\bm z)\right\}$, it's exact linear reformulation is given by 
 \begin{equation}
 \left\{ (\bm z,\widehat{z}) \in \{0,1\}^{|\mathcal{J}|} \times [0,1]: \wh{z}\cdot \bm 1 \leqslant \bm z \  \text{ and } \wh{z} \geqslant \bm 1^T \bm z - |\mathcal{J}| + 1\right \}.
 \label{eq:mcc_bin}
 \end{equation}

\paragraph{$\bm \lambda$-Formulation:} The strongest relaxation of the set $X_{\bm x} = \{(\bm x, \wh{\varphi}) \in [\bm \ell, \bm u] \times \mathbb R: \wh{\varphi} = \varphi(\bm x) \}$ is based on the extreme-point characterization.
More formally, \cite{Rikun1997} shows that $\conv{X_{\bm x}}$ is defined by 
\begin{gather}
\conv{X_{\bm x}} = \underset{\bm x,\wh{\varphi}}{\mathrm{Proj}} \left( \left \langle \varphi_{\bm x}(\bm x),\wh{\varphi} \right\rangle^{\bm \lambda} \right), \label{eq:conv_lambda} \\ 
\left \langle \varphi_{\bm x}(\bm x),\wh{\varphi} \right \rangle^{\bm \lambda}  = 
\left\{ (\bm x, \wh{\varphi}, \bm \lambda)  \in [\bm \ell, \bm u] \times \mathbb R \times \Delta_{|\mathcal K|} : \bm x = \sum_{k \in \mathcal K} \lambda_k \bm \xi_k, \,\wh{\varphi} = \sum_{k\in \mathcal K} \lambda_k \varphi_{\bm x}(\bm \xi_k) \right\} \label{eq:lambda_ml}
\end{gather}

$\Delta_{|\mathcal K|}$ is a $|\mathcal K|$-dimensional 0-1 simplex. This formulation can also be applied to an MIMF ($\varphi(\bm x,\bm z)$) with $2^{|\mathcal{I}|+|\mathcal{J}|}$ extreme points. However, the major drawback of using this formulation for MIMF is that $|\bm \lambda|$ grows exponentially with the number terms in the MIMF. Thus, it would be useful to derive more compact relaxations. In summary, the literature on tractable formulations for an MIMF, $\varphi(\bm x, \bm z)$, is scarce, thus motivating the study in this article.

\section{Disjunctive formulation $\mathfrak{F}^{\lambda}$}
We now present $\mathfrak{F}^{\lambda}$, a relaxation of $X$ based on an extreme-point characterization that has at most $\mathcal{O}(2^{|\mathcal{I}|})$ $\bm \lambda$ variables. This relaxation  is obtained as a disjunctive union of sets $X^0$ and $X^1$, where


\begin{gather}
X^0 = \left\{
\begin{gathered}
(\bm x,\wh{\varphi}, \bm \lambda, \bm z, \wh{z}) \in [\bm \ell, \bm u] \times \mathbb R \times [0,1]^{|\mathcal K| + |\mathcal J| + 1} : \\ 
\wh{\varphi} = 0, \ \bm 1^T \bm z \leqslant |\mathcal{J}| - 1, \ \wh{z} = 0, \ \bm \lambda = \bm 0
\end{gathered}
\right\} 
\label{eq:x0} 
\end{gather}

\begin{equation}
X^1 =  \left\{
\begin{aligned}
(\bm x,\wh{\varphi}, \bm \lambda, \bm z, \wh{z}) \in \langle \varphi_{\bm x}(\bm x), \wh{\varphi}\rangle^{\bm \lambda} \times [0,1]^{|\mathcal J| + 1} :  \bm z = \bm 1, \ \wh{z} = 1 
\end{aligned}
\right\}. 
\label{eq:x1}
\end{equation}
\noindent  We now define the set $\wh X$ and show that 
$\wh X$ is a formulation for $\operatorname{conv}(X^0 \cup X^1)$.

\begin{equation}
\wh X = \left\{ 
\begin{gathered}
(\bm x,\wh{\varphi}, \bm \lambda, \bm z, \wh{z}) \in [\bm \ell, \bm u] \times \mathbb R \times [0,1]^{|\mathcal K| + |\mathcal J| + 1} : \ \\
\wh{z}\cdot\bm 1 \leqslant \bm z, \ \wh{z} \geqslant \bm 1^T \bm z - |\mathcal{J}|+1, \\
\bm 1^T \bm \lambda = \wh{z}, \, \wh{\varphi} = \sum_{k \in \mathcal{K}} \lambda_k \varphi_{\bm x}(\bm \xi_k), \\ 
\sum_{k \in \mathcal{K}} \lambda_k \bs{\xi}_k + \bm \ell (1-\wh{z}) \leqslant \bm x \leqslant \sum_{k \in \mathcal{K}} \lambda_k \bm \xi_k +  \bm u(1-\wh{z}) \\
\end{gathered}
\right\}
\label{eq:x0Ux1}
\end{equation}

\begin{theorem} 
\label{thm:x}
$\operatorname{conv}(X^0 \cup X^1) = \wh X$.
\end{theorem}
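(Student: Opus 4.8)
The plan is to prove the two inclusions $\operatorname{conv}(X^0\cup X^1)\subseteq\wh X$ and $\wh X\subseteq\operatorname{conv}(X^0\cup X^1)$ by an explicit construction rather than invoking Balas' union‑of‑polyhedra machinery; the tailored form of $\wh X$ makes the lifting transparent. First I would observe that $X^0$ and $X^1$ are nonempty polytopes (for $X^0$, take $\bm z=\bm 0$, which is feasible when $|\mathcal J|\geqslant 1$; for $X^1$, take $\bm\lambda=\bm e_k$), so $\operatorname{conv}(X^0\cup X^1)$ is well defined.

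For $\operatorname{conv}(X^0\cup X^1)\subseteq\wh X$: since $\wh X$ is a polyhedron it is convex, so it suffices to check $X^0\subseteq\wh X$ and $X^1\subseteq\wh X$. For a point of $X^0$ substitute $\wh z=0$, $\bm\lambda=\bm 0$, $\wh\varphi=0$ into each relation defining $\wh X$; the only nonobvious checks are $\wh z\geqslant\bm 1^T\bm z-|\mathcal J|+1$, which is exactly $\bm 1^T\bm z\leqslant|\mathcal J|-1$, and the bracketing inequalities, which reduce to $\bm\ell\leqslant\bm x\leqslant\bm u$ because $\sum_k\lambda_k\bm\xi_k=\bm 0$ and $1-\wh z=1$. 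For a point of $X^1$ substitute $\bm z=\bm 1$, $\wh z=1$; then $\bm 1^T\bm\lambda=1$ holds since $\bm\lambda\in\Delta_{|\mathcal K|}$, the identities $\wh\varphi=\sum_k\lambda_k\varphi_{\bm x}(\bm\xi_k)$ and $\bm x=\sum_k\lambda_k\bm\xi_k$ come from the definition of $\langle\varphi_{\bm x}(\bm x),\wh\varphi\rangle^{\bm\lambda}$, and the bracketing inequalities collapse to $\bm x\leqslant\bm x\leqslant\bm x$.

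For the reverse inclusion, take $(\bm x,\wh\varphi,\bm\lambda,\bm z,\wh z)\in\wh X$ and write it as $\wh z\,p^1+(1-\wh z)\,p^0$ with $p^1\in X^1$, $p^0\in X^0$. If $\wh z=0$, then $\bm 1^T\bm\lambda=0$ forces $\bm\lambda=\bm 0$, hence $\wh\varphi=0$ and $\bm 1^T\bm z\leqslant|\mathcal J|-1$, so the point already lies in $X^0$; if $\wh z=1$, then $\wh z\bm 1\leqslant\bm z\leqslant\bm 1$ forces $\bm z=\bm 1$ and the bracketing inequalities force $\bm x=\sum_k\lambda_k\bm\xi_k$, so the point lies in $X^1$. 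If $0<\wh z<1$, set $\bm\lambda^1=\bm\lambda/\wh z$, $\bm x^1=\sum_k\lambda^1_k\bm\xi_k$, $\wh\varphi^1=\sum_k\lambda^1_k\varphi_{\bm x}(\bm\xi_k)$, $\bm z^1=\bm 1$, $\wh z^1=1$, and $\bm x^0=(\bm x-\sum_k\lambda_k\bm\xi_k)/(1-\wh z)$, $\bm z^0=(\bm z-\wh z\bm 1)/(1-\wh z)$, $\wh\varphi^0=0$, $\bm\lambda^0=\bm 0$, $\wh z^0=0$. Then $\bm 1^T\bm\lambda^1=1$ gives $\bm\lambda^1\in\Delta_{|\mathcal K|}$ and $\bm x^1\in[\bm\ell,\bm u]$ automatically as a convex combination of the box vertices $\bm\xi_k$, so $p^1\in X^1$; for $p^0$, $\bm z^0\geqslant\bm 0$ from $\wh z\bm 1\leqslant\bm z$, $\bm z^0\leqslant\bm 1$ from $\bm z\leqslant\bm 1$, $\bm 1^T\bm z^0\leqslant|\mathcal J|-1$ from $\wh z\geqslant\bm 1^T\bm z-|\mathcal J|+1$, and $\bm\ell\leqslant\bm x^0\leqslant\bm u$ precisely from the two bracketing vector inequalities, so $p^0\in X^0$. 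A componentwise check then shows $\wh z\,p^1+(1-\wh z)\,p^0$ reproduces the original point.

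The main obstacle is purely organizational: matching each defining relation of $\wh X$ to the exact place where it is consumed, and in particular recognizing that the pair of vector inequalities $\sum_k\lambda_k\bm\xi_k+\bm\ell(1-\wh z)\leqslant\bm x\leqslant\sum_k\lambda_k\bm\xi_k+\bm u(1-\wh z)$ is exactly what forces the residual point $\bm x^0$ to remain in $[\bm\ell,\bm u]$ while $\bm x^1$ requires no separate bound check. An alternative route is to cast $X^0$ and $X^1$ in Balas' disjunctive form, apply the homogenized union‑of‑polyhedra description, and project out the copy and multiplier variables; it arrives at the same identity but buries the geometric content exposed above.
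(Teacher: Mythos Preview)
Your proof is correct and follows essentially the same approach as the paper: both directions are handled identically, and for $0<\wh z<1$ you construct exactly the same two points $p^0$ and $p^1$ as the paper's $\bm p^*_0$ and $\bm p^*_1$, then verify membership and the convex-combination identity. If anything, your version is slightly more thorough in checking that $p^1\in X^1$ and that $\bm z^0\in[0,1]^{|\mathcal J|}$, which the paper leaves to the reader.
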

\begin{proof}
This theorem is proved by showing $\operatorname{conv}(X^0 \cup X^1) \subseteq \wh X$ and $ \wh X \subseteq \operatorname{conv}(X^0 \cup X^1)$ are true. 
   \paragraph{$\boxed{\operatorname{conv}(X^0 \cup X^1) \subseteq \wh X}$} 
   We first observe that when $\wh z = 0$, $X^0 = \wh{X}$, indicating that $X^0 \subset \wh X$. Similarly, we have $X^1 \subset \wh X$. These facts together with $\wh X$ being a convex set results in $\operatorname{conv}(X^0 \cup X^1) \subseteq \wh X$. 
\paragraph{$\boxed{ \wh X \subseteq \operatorname{conv}(X^0 \cup X^1)}$}
Let $\bm p^* = (\bm x^*, \wh{\varphi}^*, \bm \lambda^*, \bm z^*, \wh{z}^*)$ be any point in $\wh{X}$. If $\wh{z}^* = 0$, then $\bm p^* = (\bm x^*, 0, \bm 0, \bm z^*, 0)$ and $\bm p^* \in X^0$. Similarly, if $\wh{z}^* = 1$, it is trivial to conclude that $\bm p^* \in X^1$. 
Now consider the case where $0 < \wh{z}^* < 1$. To show $\bm p^* \in \operatorname{conv}(X^0 \cup X^1)$, we construct two points $\bm p^*_0 \in X^0$ and $\bm p^*_1 \in X^1$ such that $\bm p^*$ is a convex combination of $\bm p^*_0$ and $\bm p^*_1$. Given $\bm 1 \in \mathbb R^{|\mathcal J|}$, the points are
    \begin{flalign}
    \bm p^*_0 = \left(\frac{\bm x^* - \sum_{k \in \mathcal{K}} \lambda_k^* \bm{\xi}_k}{1-\wh{z}^*}, 0, \bm 0, \frac{\bm z^* - \wh{z}^* \cdot \bm 1}{1-\wh{z}^*} , 0\right), \label{eq:p0} \quad 
    \bm p^*_1 = \left( \sum_{k \in \mathcal{K}} \left(\frac{\lambda_k^*}{\wh{z}^*}\right) \bs{\xi}_k, \frac{\wh{\varphi}^*}{\wh{z}^*}, \frac{\bm \lambda^*}{\wh{z}^*}, \bm 1, 1\right).
    \end{flalign} 
    \noindent
    To show $\bm p^*_0 \in X^0$, we only need to prove that
    \begin{flalign}
    \bm \ell \leqslant \frac{\bm x^* - \sum_{k \in \mathcal{K}} \lambda_k^* \bs{\xi}_k}{1-\wh{z}^*} \leqslant \bm u \quad \text{and} \quad \frac{ \bm 1^T \bm z^* - |\mathcal J| \wh{z}^*}{1-\wh{z}^*} \leqslant |\mathcal J| - 1. \label{eq:p0_claim}
    \end{flalign}
    is true.
    Given that $\bm p^* \in X$ and $0 < \wh{z}^* < 1$, then the following inequalities hold:
    \begin{gather}
    \sum_{k \in \mathcal{K}} \lambda^*_k \bs{\xi}_k + \bm \ell (1-\wh{z}^*) \leqslant \bm x^* \leqslant \sum_{k \in \mathcal{K}} \lambda_k^* \bm \xi_k +  \bm u(1-\wh{z}^*)  \label{eq:xvalid} \\
    \wh{z}^* \geqslant \bm 1^T \bm z^* - |\mathcal J| + 1. \label{eq:zvalid}
    \end{gather}
    The inequalities of Eq. \eqref{eq:p0_claim} are then derived through linear algebra on Eqs. \eqref{eq:xvalid} and \eqref{eq:zvalid}. Similarly, it is easy to verify that the above point $\bm p_1^* \in X^1$. 

    \noindent $\wh X \subseteq \operatorname{conv}(X^0 \cup X^1)$ is proved by observing that $\bm p^* = (1-\wh{z}^*)\bm p_0^* + \wh{z}^* \bm p_1^*$ when $0 < \wh{z}^* < 1$. 
    Given  $\operatorname{conv}(X^0 \cup X^1) \subseteq \wh X$ and $ \wh X \subseteq \operatorname{conv}(X^0 \cup X^1)$, we have $\operatorname{conv}(X^0 \cup X^1) = \wh X$.
\end{proof}

\noindent 
Note that $\mathfrak{F}^{\lambda}$ can be obtained using a non-trivial projection of the extended formulation described in \cite{ceria1999convex} characterizing the convex hull of the union of convex sets using perspective maps.
We \emph{conjecture} that the projection of $\mathfrak{F}^{\lambda}$ onto the space of original variables is indeed $\operatorname{conv}(X)$, but this remains an open question. 

\subsection{Disjunctive formulation $\mathfrak{F}^{rmc}$}
We now present $\mathfrak{F}^{rmc}$, a relaxation of set $X$ based on recursive McCormick relaxations on bilinear functions, typically employed in state-of-the-art global solvers. Let
\begin{equation}
  \varphi(\bm x, \bm z) = \underbrace{x_1x_2\dots x_{|\mathcal{I}|-1}}_{\tilde\varphi_{\bm{x}}(\bm x)}x_{|\mathcal{I}|} \prod_{j \in \mathcal{J} }z_j.
  \label{eq:f_rmc}
\end{equation}
By applying recursive McCormick relaxations, $\tilde\varphi_{\bm x}(\bm x)$ can be replaced by a lifted variable $\wh \varphi$, thus reducing \eqref{eq:f_rmc} to $\varphi(\bm x, \bm z) = \wh\varphi \cdot x_{|\mathcal{I}|} \prod_{j \in \mathcal{J} }z_j$. These recursive relaxations do not necessarily capture the convex hull of the MIMF in the extended space. However, observing the special structure of $\wh\varphi \cdot x_{|\mathcal{I}|} \prod_{j \in \mathcal{J} }z_j$, which is bilinear in continuous variables and multilinear in binary variables, we now characterize the convex hull of $\varphi(\bm x,\bm z) = x_1x_2\prod_{j\in \mathcal{J}}z_j,$ in the extended space of McCormick-based constraints. The formulation is obtained as a disjunctive union of the sets $Z^0$ and $Z^1$, given by:

\begin{gather}
Z^0 = \left\{
\begin{gathered}
(\bm x,\wh{\varphi}, \bm{\wh{xz}}, \bm z, \wh{z}) \in [\bm \ell, \bm u] \times \mathbb R^3 \times [0,1]^{|\mathcal J| + 1} : \\ 
\wh{\varphi} = 0, \ \bm{\wh{xz}}=\bm 0, \ \bm 1^T \bm z \leqslant |\mathcal{J}| - 1, \ \wh{z} = 0
\end{gathered}
\right\} 
\label{eq:z0} 
\end{gather}

\begin{gather}
Z^1 =  \left\{
\begin{gathered}
(\bm x,\wh{\varphi}, \bm{\wh{xz}}, \bm z, \wh{z}) \in \langle \bm x, \wh \varphi \rangle^{MC} \times \mathbb R^2 \times [0,1]^{|\mathcal J| + 1} : \\
\bm z = \bm 1, \ \wh{z} = 1,  \ \bm{\wh{xz}}=\bm x
\end{gathered}
\right\}. 
\label{eq:z1}
\end{gather}

\paragraph{} We now define the set $\wh Z$ as follows:
\begin{equation}
\wh Z = \left\{ 
\begin{gathered}
(\bm x,\wh{\varphi}, \bm{\wh{xz}}, \bm z, \wh{z}) \in [\bm \ell, \bm u] \times \mathbb R^{3} \times [0,1]^{|\mathcal J| + 1}:  \\
\wh{z}\cdot\bm 1 \leqslant \bm z, \ \wh{z} \geqslant \bm 1^T \bm z - |\mathcal{J}|+1, \\
\wh{\varphi} \geqslant u_2 \cdot \wh{xz}_1 + u_1 \cdot\wh{xz}_2 - u_1 u_2\cdot\wh z, \quad \wh{\varphi} \geqslant \ell_2 \cdot\wh{xz}_1 + \ell_1 \cdot\wh{xz}_2 - \ell_1 \ell_2\cdot\wh z, \\
\wh{\varphi} \leqslant u_2 \cdot\wh{xz}_1 + \ell_1 \cdot\wh{xz}_2 - \ell_1 u_2\cdot\wh z, \quad \wh{\varphi} \leqslant \ell_2 \cdot\wh{xz}_1 + u_1 \cdot\wh{xz}_2 - u_1 \ell_2\cdot\wh z, \\ 
\wh z  \cdot \bm \ell \leqslant \bm{\wh{xz}} \leqslant \wh z  \cdot \bm u , \\
\bm x - (1-\wh z) \cdot \bm u \leqslant \bm{\wh{xz}} \leqslant \bm x - (1- \wh z) \cdot \bm \ell
\end{gathered}
\right\}
\label{eq:z0Uz1}
\end{equation}

\begin{theorem} 
\label{thm:z}
$\operatorname{conv}(Z^0 \cup Z^1) = \wh Z$.
\end{theorem}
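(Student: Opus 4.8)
The plan is to follow the proof of Theorem~\ref{thm:x} essentially verbatim, with the extreme-point ($\bm\lambda$) encoding of the bilinear term replaced by its McCormick encoding in the lifted variables $\bm{\wh{xz}}$; the right-hand-side factors of $\wh z$ appearing in the McCormick rows of \eqref{eq:z0Uz1}, together with the two-sided linking constraints on $\bm{\wh{xz}}$, are exactly the perspective versions of the data of $Z^0$ and $Z^1$, so the disjunctive machinery goes through.

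First I would prove $\operatorname{conv}(Z^0\cup Z^1)\subseteq\wh Z$. Since $\wh Z$ is a polyhedron, hence convex, it is enough to check $Z^0\subseteq\wh Z$ and $Z^1\subseteq\wh Z$. Substituting $\wh z=0,\ \wh\varphi=0,\ \bm{\wh{xz}}=\bm 0$ into \eqref{eq:z0Uz1} reduces the four McCormick inequalities to $0\geqslant 0$, reduces $\wh z\bm\ell\leqslant\bm{\wh{xz}}\leqslant\wh z\bm u$ to $\bm 0\leqslant\bm 0\leqslant\bm 0$, reduces $\bm x-(1-\wh z)\bm u\leqslant\bm{\wh{xz}}\leqslant\bm x-(1-\wh z)\bm\ell$ to precisely $\bm\ell\leqslant\bm x\leqslant\bm u$, and leaves $\wh z\bm 1\leqslant\bm z$ and $\wh z\geqslant\bm 1^T\bm z-|\mathcal J|+1$ as $\bm 0\leqslant\bm z$ and $\bm 1^T\bm z\leqslant|\mathcal J|-1$, all of which hold on $Z^0$; so $Z^0\subseteq\wh Z$. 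Substituting $\wh z=1,\ \bm z=\bm 1,\ \bm{\wh{xz}}=\bm x$ turns the McCormick rows of \eqref{eq:z0Uz1} into the standard McCormick system \eqref{eq:mc}, which holds because $(\bm x,\wh\varphi)\in\langle\bm x,\wh\varphi\rangle^{MC}$, turns the bound rows into $\bm\ell\leqslant\bm x\leqslant\bm u$ and $\bm x\leqslant\bm x\leqslant\bm x$, and satisfies the last two rows with equality; so $Z^1\subseteq\wh Z$.

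Second, for $\wh Z\subseteq\operatorname{conv}(Z^0\cup Z^1)$, take an arbitrary $\bm p^*=(\bm x^*,\wh\varphi^*,\bm{\wh{xz}}^*,\bm z^*,\wh z^*)\in\wh Z$. If $\wh z^*=0$, the constraints $\wh z\bm\ell\leqslant\bm{\wh{xz}}\leqslant\wh z\bm u$ force $\bm{\wh{xz}}^*=\bm 0$ and then the McCormick rows force $\wh\varphi^*=0$, so $\bm p^*\in Z^0$; if $\wh z^*=1$ then $\bm z^*=\bm 1$ and $\bm{\wh{xz}}^*=\bm x^*$, so $\bm p^*\in Z^1$. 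For $0<\wh z^*<1$ I would exhibit $\bm p^*=(1-\wh z^*)\bm p_0^*+\wh z^*\bm p_1^*$ with
\begin{equation*}
\bm p_0^* = \left(\frac{\bm x^*-\bm{\wh{xz}}^*}{1-\wh z^*},\,0,\,\bm 0,\,\frac{\bm z^*-\wh z^*\bm 1}{1-\wh z^*},\,0\right),\qquad
\bm p_1^* = \left(\frac{\bm{\wh{xz}}^*}{\wh z^*},\,\frac{\wh\varphi^*}{\wh z^*},\,\frac{\bm{\wh{xz}}^*}{\wh z^*},\,\bm 1,\,1\right).
\end{equation*}
The convex-combination identity is an immediate coordinatewise check. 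To see $\bm p_0^*\in Z^0$: dividing $\bm x^*-(1-\wh z^*)\bm u\leqslant\bm{\wh{xz}}^*\leqslant\bm x^*-(1-\wh z^*)\bm\ell$ by $1-\wh z^*>0$ gives $\bm\ell\leqslant(\bm x^*-\bm{\wh{xz}}^*)/(1-\wh z^*)\leqslant\bm u$, and rearranging $\wh z^*\geqslant\bm 1^T\bm z^*-|\mathcal J|+1$ gives $\bm 1^T(\bm z^*-\wh z^*\bm 1)/(1-\wh z^*)\leqslant|\mathcal J|-1$; the remaining entries ($\wh\varphi=0,\ \bm{\wh{xz}}=\bm 0,\ \wh z=0$) hold by construction. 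To see $\bm p_1^*\in Z^1$: dividing $\wh z^*\bm\ell\leqslant\bm{\wh{xz}}^*\leqslant\wh z^*\bm u$ by $\wh z^*>0$ gives $\bm\ell\leqslant\bm{\wh{xz}}^*/\wh z^*\leqslant\bm u$, and dividing each of the four McCormick rows of \eqref{eq:z0Uz1} by $\wh z^*$ recovers the system \eqref{eq:mc} evaluated at $(\bm{\wh{xz}}^*/\wh z^*,\wh\varphi^*/\wh z^*)$, placing this point in $\langle\bm x,\wh\varphi\rangle^{MC}$; the conditions $\bm z=\bm 1,\ \wh z=1,\ \bm{\wh{xz}}=\bm x$ hold by construction. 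Combining the two inclusions yields $\operatorname{conv}(Z^0\cup Z^1)=\wh Z$.

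I do not anticipate a genuine obstacle: the argument is routine once Theorem~\ref{thm:x} is in hand. The only place that warrants care is verifying that the $1/\wh z^*$ scaling of the $\wh z$-weighted McCormick rows in \eqref{eq:z0Uz1} lands exactly on the unscaled McCormick system \eqref{eq:mc}, and symmetrically that the two linking blocks $\wh z\bm\ell\leqslant\bm{\wh{xz}}\leqslant\wh z\bm u$ and $\bm x-(1-\wh z)\bm u\leqslant\bm{\wh{xz}}\leqslant\bm x-(1-\wh z)\bm\ell$ are the perspectives of $\bm{\wh{xz}}=\bm x\in[\bm\ell,\bm u]$ on $Z^1$ and of $\bm x\in[\bm\ell,\bm u],\ \bm{\wh{xz}}=\bm 0$ on $Z^0$ respectively — i.e., that the formulation \eqref{eq:z0Uz1} was written with exactly the right constant-term weights.
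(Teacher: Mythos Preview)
Your proposal is correct and follows precisely the approach the paper intends: the paper's own proof is the single sentence ``The proof of this theorem is very similar to that of Theorem~\ref{thm:x},'' and you have carried out exactly that analogy, replacing the $\bm\lambda$-encoding by the lifted variables $\bm{\wh{xz}}$ and verifying that the $\wh z$-weighted McCormick rows and the two linking blocks are the perspective constraints needed for the disjunctive decomposition. The construction of $\bm p_0^*$ and $\bm p_1^*$ and their membership checks are the direct counterparts of \eqref{eq:p0}--\eqref{eq:zvalid}, so nothing further is required.
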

\begin{proof}
The proof of this theorem is very similar to that of Theorem \ref{thm:x}. 
\end{proof}
A special case of this function, i.e., $\varphi=x_1x_2z$ has been dealt with in \cite{bestuzheva2016convex}, where the authors derive a convex hull formulation in the space of original variables only when the variables $(x_1,x_2)$ are forced to zero values when the binary variable $z$ is assigned a zero value. 

\section{Initial Results and Conclusions} 
\label{sec:res}
All formulations were solved on a laptop with an Intel(R) i7, 2.60GHz processor and 16GB of memory using Gurobi 7.5.2 with default options. 
As shown in \eqref{eq:ex1}, we consider an MINLP with a sum of $2k-$linear MIMFs such that the feasible set is non-empty and admits non-trivial solutions. 
\begin{equation}
  \begin{aligned}
    &  \underset{\bm x, \bm z}{\text{minimize}}
    & & \sum_{i=1}^{n} \left(c_ix_i + d_iz_i\right) \\
    &  \text{subject to}
    & & \sum_{i=1}^{n-k+1}\left(\prod_{j=i}^{i+k-1} x_jz_j\right) \geqslant D, \\
    & & & \bm x \in [\bm \ell, \bm u], \quad \bm z \in \{0,1\}^{n}
    \end{aligned}
    \label{eq:ex1}
\end{equation}

In \eqref{eq:ex1}, $c_i,d_i$ and $\bm \ell$ are independently assigned pseudorandom values on an open interval between 0 and 1, and $\bm u = 10\bm \ell$. For example, at $k=4$ and $n=5$, the constraint in \eqref{eq:ex1} will be $x_1x_2x_3x_4z_1z_2z_3z_4 +x_2x_3x_4x_5z_2z_3z_4z_5 \geqslant D$. Applying $\mathfrak{F}^{\lambda}$ on \eqref{eq:ex1} implies that every MIMF is replaced by a lifted variable, say $\wh \varphi_i$, such that $\sum_{i} \wh \varphi_i \geq D$ and $ \wh \varphi_i$ admits the constraints from  $\mathfrak{F}^{\bm \lambda}$ in \eqref{eq:x0Ux1}, thus creating a lower-bounding MILP for \eqref{eq:ex1}. For this MILP, we define  
$$\mathrm{LP} \ \mathrm{gap} = \frac{OPT-LB}{OPT}\cdot 100,$$
where, $OPT$ and $LB$ correspond to MILP's optimal and continuous relaxation objective values.

 \begin{table}[htp]
		\centering
				\caption{$\mathfrak{F}^{\lambda}$ and $\mathfrak{F}^{rmc}$ applied on \eqref{eq:ex1} for $k=4$ and $D = 0.7n$. Bold font represents best run times.}
		\label{tab:quad}
		\setlength{\tabcolsep}{6pt}
		\centering
    \begin{tabular}{ccccccccc}
        \hline\noalign{\smallskip}
        $n$& \multicolumn{2}{c}{MILP obj.} & \multicolumn{2}{c}{LP gap (\%)} & \multicolumn{2}{c}{LP run time (sec.)} & \multicolumn{2}{c}{MILP run time (sec.)} \\ 
        \cmidrule(lr){2-3} \cmidrule(lr){4-5} \cmidrule(lr){6-7} \cmidrule(lr){8-9} 
         & $\mathfrak{F}^{\lambda}$ &  $\mathfrak{F}^{rmc}$& $\mathfrak{F}^{\lambda}$&  $\mathfrak{F}^{rmc}$ & $\mathfrak{F}^{\lambda}$  & $\mathfrak{F}^{rmc}$ & $\mathfrak{F}^{\lambda}$ & $\mathfrak{F}^{rmc}$\\
		\noalign{\smallskip}
\hline
\noalign{\smallskip}
100	&	366.0	&	365.58	&	3.1	&	3.1	&	\bf{2.6}&	2.8	&	3.0	&	\bf{2.7}	\\
500	&	1750.4	&	1750.3	&	0.4	&	0.5	&	3.5	&	\bf{2.5}	&	\bf{3.4} &	4.0	\\
1000	&	3484.7	&	3484.7	&	0.1	&	0.6	&	3.0	&	\bf{2.5}	&	\bf{5.8}&	8.4	\\
2000	&	6804.7	&	6802.5	&	$<$0.001	&	0.4	&	\bf{2.8}		&	6.6	&	\bf{4.5}&	11.2	\\
4000	&	13503.4	&	13500.4	&	$<$0.001	&	0.4	&	\bf{3.9}&	7.8	&	\bf{6.1}		&	13.2	\\
6000	&	20151.2	&	20148.2	&	$<$0.001&	0.3	&	\bf{5.3}&	10.2	&	\bf{45.1}		&	201.2	\\
8000	&	27121.9	&	27117.8	&	$<$0.001&	0.3	&	\bf{6.2}	&	12.2	&	\bf{6.5}		&	198.0	\\
10000	&	33905.6	&	33900.5	&	$<$0.001	&	0.4	&	\bf{20.6}	&	23.5	&	\bf{62.0}	&	540.2	\\
        \noalign{\smallskip}
        \hline
		\end{tabular}
\end{table}
Table \ref{tab:quad} shows the performance comparisons of formulations,  $\mathfrak{F}^{\lambda}$ and $\mathfrak{F}^{rmc}$ on double-quadrilinear functions. Clearly, $\mathfrak{F}^{\lambda}$ overall performs the best in terms of runtimes, both for MILPs and their LP relaxations. Though $\mathfrak{F}^{\lambda}$ does not necessarily capture the convex hull of an arbitrary sum of MIMFs, the LP gaps are very tight and indeed produce close-to integral solutions (gaps $<$ 0.001\%) on large instances, thus speeding up the convergence of MILPs. Another interesting observation is that $\mathfrak{F}^{rmc}$, with recursive McCormick relaxations clearly loses on capturing the convex hull of individual MIMFs, thus producing weaker lower bounds to the original MINLP in \eqref{eq:ex1}. However, for $k=2$ (double-bilinear) in \eqref{eq:ex1}, we observed that both $\mathfrak{F}^{\lambda}$ and $\mathfrak{F}^{rmc}$ produced identical lower bounds, validating the bilinear convex hull result in \eqref{eq:z0Uz1}. Further, for $k=4$, though the number of extreme points grow up to ``16'' per MIMF, the numerical performance of $\mathfrak{F}^{\lambda}$ is still superior to $\mathfrak{F}^{rmc}$, which closely represents state-of-the-art relaxation approaches applied in the literature.  

\paragraph{Summary} In this paper, we considered MIMFs and developed new convex relaxations based on the convex hull of the disjunctive union of these two sets of variables in an extended space. While this paper has made strides in tightening relaxations of MIMFs, there remain a number of important future directions including the characterization of the convex hull of an MIMF in the space of its original variables.

\paragraph{Acknowledgements} The work was funded by the Center for Nonlinear Studies (CNLS) at LANL and the LANL's directed research and development
project “POD: A Polyhedral Outer-approximation, Dynamic-discretization optimization solver”. It was carried out under the auspices of the NNSA of the U.S. DOE at 
LANL under Contract No. DE-AC52-06NA25396.



\bibliographystyle{plain}%
\bibliography{references.bib}

\begin{thebibliography}{10}

\bibitem{bao2015global}
X.~Bao, A.~Khajavirad, N.V. Sahinidis, and M.~Tawarmalani.
\newblock Global optimization of nonconvex problems with multilinear
  intermediates.
\newblock {\em Mathematical Programming C}, 7(1):1--37, 2015.

\bibitem{bestuzheva2016convex}
K.~Bestuzheva, H.~Hijazi, and C.~Coffrin.
\newblock Convex relaxations for quadratic on/off constraints and applications
  to optimal transmission switching.
\newblock {\em Preprint: http://www. optimization-online. org/DB
  FILE/2016/07/5565. pdf}, 2016.

\bibitem{ceria1999convex}
S.~Ceria and J.~Soares.
\newblock Convex programming for disjunctive convex optimization.
\newblock {\em Mathematical Programming}, 86(3):595--614, 1999.

\bibitem{deep2017acc}
D.~Deka, H.~Nagarajan, and S.~Backhaus.
\newblock Optimal topology design for disturbance minimization in power grids.
\newblock In {\em 2017 American Control Conference (ACC)}, pages 2719--2724,
  May 2017.

\bibitem{fortet1960applications}
R.~Fortet.
\newblock Applications de l’algebre de boole en recherche op{\'e}rationelle.
\newblock {\em Revue Fran{\c{c}}aise de Recherche Op{\'e}rationelle},
  4(14):17--26, 1960.

\bibitem{gunluk2010perspective}
O.~Günlük and J.~Linderoth.
\newblock Perspective reformulations of mixed integer nonlinear programs with
  indicator variables.
\newblock {\em Mathematical programming}, 124(1-2):183--205, 2010.

\bibitem{hijazi2012mixed}
H.~Hijazi, P.~Bonami, G.~Cornuéjols, and A.~Ouorou.
\newblock Mixed-integer nonlinear programs featuring “on/off” constraints.
\newblock {\em Computational Optimization and Applications}, 52(2):537--558,
  2012.

\bibitem{liberti2004reformulation}
L~Liberti.
\newblock {\em Reformulation and convex relaxation techniques for global
  optimization}.
\newblock PhD thesis, Springer, 2004.

\bibitem{lu2017gmd}
M.~Lu, H.~Nagarajan, E.~Yamangil, R.~Bent, S.~Backhaus, and A.~Barnes.
\newblock {Optimal Transmission Line Switching Under Geomagnetic Disturbances}.
\newblock {\em IEEE Trans. on Power Systems}, 33(3):2539--2550, { May} {2018}.

\bibitem{mccormick1976computability}
G.P. McCormick.
\newblock Computability of global solutions to factorable nonconvex programs:
  Part i—convex underestimating problems.
\newblock {\em Mathematical programming}, 10(1):147--175, 1976.

\bibitem{nagarajan2017adaptive}
H.~Nagarajan, M.~Lu, S.~Wang, R.~Bent, and K~Sundar.
\newblock An adaptive, multivariate partitioning algorithm for global
  optimization of nonconvex programs.
\newblock {\em arXiv preprint arXiv:1707.02514}, 2017.

\bibitem{nagarajan2016tightening}
H.~Nagarajan, M.~Lu, E.~Yamangil, and R.~Bent.
\newblock Tightening {McC}ormick relaxations for nonlinear programs via dynamic
  multivariate partitioning.
\newblock In {\em {International Conference on Principles and Practice of
  Constraint Programming}}, pages 369--387. Springer, 2016.

\bibitem{nagarajan2016optimal}
H.~Nagarajan, E.~Yamangil, R.~Bent, P.~Van~Hentenryck, and S.~Backhaus.
\newblock Optimal resilient transmission grid design.
\newblock In {\em Power Systems Computation Conference, 2016}, pages 1--7.
  IEEE, 2016.

\bibitem{Rikun1997}
A.~D. Rikun.
\newblock {A} convex envelope formula for multilinear functions.
\newblock {\em {Journal of Global Optimization}}, 10(4):425--437, 1997.

\bibitem{sherali1990hierarchy}
H.D. Sherali and W.P. Adams.
\newblock A hierarchy of relaxations between the continuous and convex hull
  representations for zero-one programming problems.
\newblock {\em SIAM Journal on Discrete Math.}, 3(3):411--430, 1990.

\end{thebibliography}

\end{document}